\documentclass[11pt]{amsart}

\usepackage{amsmath, amsthm, amssymb, amsfonts, enumerate, graphicx, multicol, multirow}

\newtheorem{lem}{Lemma}[section]
\newtheorem{thm}[lem]{Theorem}
\newtheorem*{thma}{Theorem A}
\newtheorem*{thmb}{Theorem B}

\newtheorem{cor}[lem]{Corollary}

\theoremstyle{definition}
\newtheorem{dfn}[lem]{Definition}

\newtheorem{question}[lem]{Question}
\theoremstyle{remark}
\newtheorem{rmk}[lem]{Remark}

\parindent=0pt
\addtolength{\parskip}{0.5\baselineskip}

\DeclareMathOperator{\Ext}{Ext}

\DeclareMathOperator{\opH}{H}
\DeclareMathOperator{\Hom}{Hom}

\DeclareMathOperator{\Ind}{Ind}

\newcommand{\N}{\mathbb{N}}

\newcommand{\Z}{\mathbb{Z}}

\newcommand{\SL}{\mathrm{SL}}

\setlength{\topmargin}{0mm}
\setlength{\textheight}{218mm}
\setlength{\oddsidemargin}{0mm}
\setlength{\evensidemargin}{0mm}
\setlength{\textwidth}{165mm}

\begin{document}

\title{On the cohomology of $\SL_2$ with coefficients in a simple module}
\author{John Rizkallah}
\address{Homerton College, Cambridge}
\email{jr588@cam.ac.uk}

\pagestyle{plain}

	\begin{abstract}
	Let $G$ be the simple algebraic group $\SL_2$ defined over an algebraically closed field $k$ of characteristic $p > 0$.
	Using results of A. Parker, we develop a method which gives, for any $q \in \N$, a closed form description of all simple modules $M$ such that $\opH^q(G,M) \neq 0$, together with the associated dimensions $\dim \opH^q(G,M)$.
	We apply this method for arbitrary primes $p$ and for $q \leq 3$, confirming results of Cline and Stewart along the way.
	Furthermore, we show that under the hypothesis $p > q$, the dimension of the cohomology $\opH^q(G,M)$ is at most 1, for any simple module $M$.
	Based on this evidence we discuss a conjecture for general semisimple algebraic groups.
	\end{abstract}

	\maketitle
	
	\section{Introduction}
	
	In \cite{PS1}, the authors prove that for any semisimple simply connected algebraic group $G$ with root system $\Phi$ over an algebraically closed field $k$ of positive characteristic, there is a constant $c = c(\Phi,q)$ such that $\dim\opH^q(G,L(\lambda)) \leq c$ for any dominant weight $\lambda$.
	In particular, this constant can be chosen independently of the characteristic $p$ of $k$.
	However, one cannot drop the dependence of $c$ on $q$, even for the case $G = \SL_2$: in \cite{DS2}, Stewart shows that, for any fixed $p$, the sequence
	\[ \{ \gamma_q \} = \{ \max_\lambda \dim \opH^q(\SL_2,L(\lambda)) \} \] grows exponentially in $q$.
	However, for $p$ sufficiently large compared to the degree of the cohomology, we show that in the case $G = \SL_2$ the constant $c$ can indeed be chosen independently of $q$.
	Specifically, we show that, if $p > q$, then $\dim \opH^q(\SL_2,L(\lambda)) \leq 1$.
	In order to prove this result we develop, using a theorem of Parker, a method for finding the weights $\lambda$ such that the space $\opH^q(\SL_2,L(\lambda))$ is non-trivial.
	This method also produces a closed-form description of these weights which is uniform in $p$.
	We demonstrate the method in the cases $q = 1, 2, 3$.
	The first case recovers a special case of a result of Cline in \cite{EC1}, the second case recovers a result of Stewart in \cite{DS1}.
	The third case is a new result:
	
	\begin{thma}
	Let $M = L(\lambda)^{[n]}$ be any Frobenius twist (possibly trivial) of the simple module of highest weight $\lambda$.
	If $\opH^3(\SL_2,M) \neq 0$ and $p > 2$ then $\lambda$ is one of
		\begin{enumerate}
		\item $4p - 2$
		\item $2p^2 - 4p$ \hspace{5mm} $(p > 3)$
		\item $p^n(2p-2) + 2p$ \hspace{5mm} $(n > 1)$
		\item $2p^2+2p-2$
		\item $p^n(2p-2)+2p^2-2p-2$ \hspace{5mm} $(n > 2)$
		\item $2p^3-2p^2-2p-2$
		\item $p^n(\lambda_2) + 2p-2$ \hspace{5mm} $(n > 1,$ $\lambda_2$ any 2-cohomological weight).
		\end{enumerate}
	Furthermore, $\dim\opH^3(\SL_2,M) = 1$.
		
		If $\opH^3(\SL_2,M)\neq 0$ and $p=2$ then $\lambda$ is one of
		\begin{enumerate}\item $6$
		\item $8$
		\item $2^n+2$ $(n>3)$
		\item $2^n+4$ $(n>3)$
		\item $2^n+10$ $(n>4)$.
		\end{enumerate}
	Furthermore, $\dim\opH^3(\SL_2,M) = 1$, except when $\lambda = 2^n + 4$ and $n > 4$, in which case $\dim\opH^3(\SL_2,M) = 2$.
		\end{thma}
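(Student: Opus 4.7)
My plan is to apply the inductive method developed earlier in the paper: building on Parker's explicit description of $\Ext^1_G$ between simple $\SL_2$-modules, this method takes the list of $(q-1)$-cohomological weights (and their dimensions) as input and outputs the list of $q$-cohomological weights together with $\dim \opH^q(G,L(\lambda))$ in each case. Since the method has already been demonstrated for $q=1$ (recovering Cline) and $q=2$ (recovering Stewart), my task is to run it once more with $q=3$.

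Concretely, I would first assemble the list of $2$-cohomological weights (including Frobenius twists) furnished by the $q=2$ case; these are exactly the $\lambda_2$ appearing in family (vii) of the statement, and indeed family (vii) arises naturally by chaining such a weight with the $\Ext^1$ step responsible for $\opH^1(G,L(2p-2))$ under a Frobenius twist. Next I would execute the algorithm: for each admissible pattern allowed by Parker's theorem, enumerate those $\lambda$ whose $p$-adic expansion matches the combinatorial constraints, splitting into the cases $p>2$ and $p=2$. The expectation is that families (i)--(vi) in odd characteristic, and (i)--(v) in characteristic $2$, exhaust the possibilities. Finally I would compute dimensions: for $p>2$ each family should contribute a single independent class, while for $p=2$ the coarser module structure forces a genuine coincidence at $\lambda = 2^n+4$ with $n>4$, producing $\dim \opH^3 = 2$.

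The main obstacle will be proving \emph{exhaustiveness} rather than existence: one must show that every class in $\opH^3$ arises from a pattern on the list, which amounts to controlling the Yoneda (or spectral sequence) contributions beyond the first step and verifying that the candidate list produced by Parker's theorem cannot be augmented. The $p=2$ analysis will require the most care, both because Parker's theorem has extra terms in characteristic two, and because the exceptional dimension at $\lambda = 2^n+4$ must be pinned down precisely, typically by exhibiting two independent classes and then ruling out a third.
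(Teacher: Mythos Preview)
Your plan matches the paper's approach: apply Parker's recursive formula (Theorem~\ref{parkersthm}) and its $p=2$ analogue, together with the Linkage Principle to restrict $\lambda_0$, and thereby reduce $\Ext^3(\Delta(0),L(\lambda))$ to lower-degree $\Ext$-groups already computed in the $q\leq 2$ cases.

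One clarification is worth making: your concern about exhaustiveness is misplaced. Parker's formulae are \emph{isomorphisms} (direct-sum decompositions), not mere inclusions, so once every summand has been analysed the list is automatically complete; there is no Yoneda or spectral-sequence bookkeeping to do beyond the recursion itself. For $p=2$, the dimension $2$ at $\lambda=2^n+4$ with $n>4$ arises not from a delicate independence argument but simply because two summands in the decomposition are simultaneously nonzero: the $\Ext^1(\Delta(2),L(\lambda'))$ term (which produces this family for all $n>3$) and the recursive $\Ext^3(\Delta(0),L(\lambda'))$ term, the latter being nonzero precisely because $\lambda'=2^{n-1}+2$ is itself $3$-cohomological once $n-1>3$.
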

	
	In \S\ref{s:genres}, we apply our method to get generic results when $p>q$ for $q$ the degree of cohomology. For this we use Parker's formula together with Jantzen's translation functors; ultimately, we get a closed form description of certain basic $q$-cohomological weights, a set we denote by $W_q$.
	
	In order to express the set $W_q$, we give the following definition, 
	
	\begin{dfn}\label{shifts}
	Given a weight $\lambda\in\N$ with $\lambda \equiv 0\mod 2p$ or $\lambda \equiv -2\mod 2p$, and $n \in \N$, define $\lambda$ \textit{shifted by $n$}, denoted $\lambda \parallel n$, as follows:
	\begin{enumerate}
	\item If $\lambda \equiv 0\mod 2p$ and $n$ is even, $\lambda \parallel n = p(\lambda + n)$.
	\item If $\lambda \equiv 0\mod 2p$ and $n$ is odd, $\lambda \parallel n = p(\lambda + n) + p - 2$.
	\item If $\lambda \equiv -2\mod 2p$ and $n$ is even, $\lambda \parallel n = p(\lambda - n)$.
	\item If $\lambda \equiv -2\mod 2p$ and $n$ is odd, $\lambda \parallel n = p(\lambda - n) + p - 2$.
	\end{enumerate}
	\end{dfn}

	The main result is
	
	\begin{thmb}
  	Assume $p > q$. Let $M=L(\lambda)^{[n]}$ be any Frobenius twist (possibly trivial) of the simple $\SL_2$-module of highest weight $\lambda$, and assume that $\dim\opH^q(\SL_2,M)\neq 0$. Then $\lambda$ lies in the set $W_q$, where \[W_q = \{ (p^n\lambda_i) \parallel (q-i) \mbox{ } | \mbox{ } n \geq 0, \lambda_i \in W_i, i = 0 \dots q-1\}.\]
	
	Moreover, $\dim \opH^q(SL_2,M)=1$.
	\end{thmb}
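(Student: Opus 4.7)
The plan is to argue by induction on the cohomological degree $q$, using Parker's recursive description of $\opH^*(\SL_2,L(\lambda))$ (invoked earlier in the paper) together with Jantzen's translation functors and the Lyndon--Hochschild--Serre spectral sequence for the short exact sequence $1\to G_1\to G\to G^{(1)}\to 1$, whose $E_2$-page is $E_2^{i,j}=\opH^i(G^{(1)},\opH^j(G_1,M))$. The base case $q=0$ is immediate: only the trivial module has $G$-invariants, and the recursive definition of $W_q$ then iteratively builds the conjectured list. The induction hypothesis supplies both the closed-form description of $W_i$ for $i<q$ and the fact that $\dim \opH^i(\SL_2,L(\lambda_i)^{[m]})=1$ for any $m\geq 0$ and any $\lambda_i\in W_i$.

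For the inductive step, suppose $\opH^q(\SL_2,L(\lambda)^{[n]})\neq 0$ with $p>q$. First I would peel off Frobenius twists: if $n\geq 1$ the module is $G_1$-trivial, so the spectral sequence collapses into contributions of the form $\opH^i(G^{(1)},\opH^{q-i}(G_1,k)\otimes L(\lambda)^{[n]})$, and the $G^{(1)}$-structure of $\opH^{q-i}(G_1,k)$ is controlled by Parker's theorem. In the untwisted case I would use translation functors to shift an arbitrary dominant $\lambda$ into one of the two congruence classes $\lambda\equiv 0$ or $\lambda\equiv -2\pmod{2p}$ required by Definition~\ref{shifts}; under $p>q$ the linkage principle and the Jantzen sum formula ensure these translations preserve both nonvanishing and the dimension of the cohomology. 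Matching the combinatorics of Parker's formula to the shift operation then identifies the $\lambda$ that can arise as precisely $(p^n\lambda_i)\parallel (q-i)$ with $\lambda_i\in W_i$ for some $i<q$.

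The main obstacle, and the step that will demand the most care, is extracting the dimension bound $\dim\opH^q(\SL_2,M)=1$. For this I would show that under $p>q$ the relevant spectral sequence degenerates at $E_2$ on the diagonal of interest, so that $\opH^q$ receives exactly one nonzero contribution. The argument splits into two checks: (a) no two weights in distinct $E_2^{i,q-i}$ positions can coincide after linkage, ensuring no nontrivial differentials can identify them; and (b) each surviving term is itself one-dimensional, which follows by induction since it is a tensor product of an inductive $\opH^i$ with a single weight space of $\opH^{q-i}(G_1,k)$. The condition $p>q$ is essential here precisely to rule out the kind of accidental coincidences that, as Stewart showed in \cite{DS2}, drive the exponential growth of $\gamma_q$ when $p$ is small.

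Finally, the closure properties of the set $W_q$ (that the construction indeed produces every $q$-cohomological weight, not just contains them) would be verified in reverse: for each $\lambda_i\in W_i$ and each $n\geq 0$, I would construct a nonzero class in $\opH^q(\SL_2,L((p^n\lambda_i)\parallel (q-i)))$ by lifting the corresponding class from $\opH^i$ via the Frobenius twist and the Bockstein-type edge map of the spectral sequence, using the $p>q$ hypothesis to guarantee the lift is nonzero. This completes the induction and establishes both parts of Theorem~B simultaneously.
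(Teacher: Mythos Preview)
Your proposal contains the right ingredients---induction on $q$, Parker's formula, translation functors---but routes them through the Lyndon--Hochschild--Serre spectral sequence, which is both unnecessary and the source of the vagueness in your dimension argument. The paper's proof is more direct precisely because Parker's formula (Theorem~\ref{parkersthm}) is already a \emph{direct sum} decomposition, not merely a spectral sequence: once you write $\lambda = p\lambda' + \lambda_0$ with $\lambda_0 \in \{0, p-2\}$, you have
\[
\opH^q(G, L(\lambda)) \cong \bigoplus_{n} \Ext^{q-n}(\Delta(n), L(\lambda')),
\]
summed over $n$ even or odd according to $\lambda_0$. Since $0 \leq n \leq q < p$, the Linkage Principle kills every summand except one (the $\Delta(n)$'s for distinct $n$ in this range lie in distinct linkage classes), and translation functors (Lemma~\ref{lemon}) identify the surviving summand with $\Ext^{q-n}(\Delta(0), L(\lambda' \pm n))$. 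This simultaneously yields the recursive description of $W_q$ (Theorem~\ref{wq}) and the dimension bound by induction, with no degeneration or differential arguments at all. The stability under Frobenius twist is handled the same way (Lemma~\ref{twisting}): apply Parker with $b=i=0$ and note that only the $n=0$ term survives linkage.

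Two specific points where your sketch goes astray. First, you propose to ``shift an arbitrary dominant $\lambda$ into one of the two congruence classes $\lambda \equiv 0$ or $-2 \pmod{2p}$'' via translation, but this is not what translation does and is not needed: if $\opH^q(G,L(\lambda)) \neq 0$ then linkage already forces $\lambda$ into one of these classes. The paper uses translation the other way around---to move $\Delta(n)$ back to $\Delta(0)$ (Lemma~\ref{lemon})---which is what actually connects the $(q{-}n,n)$-cohomological weights to the $(q{-}n)$-cohomological ones and makes the shift operation $\parallel$ appear. Second, your spectral-sequence argument for the dimension bound (checking that ``no two weights in distinct $E_2^{i,q-i}$ positions coincide'' and that differentials vanish) is a detour around a one-line observation: Parker's direct sum has exactly one nonzero summand when $p > q$, and that summand is one-dimensional by induction. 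There is nothing to lift via edge maps or Bocksteins, and no degeneration to verify.
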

	
	We finish this section by asking if the result of Theorem B can be generalised.
	
		\begin{question}
	Does there exists a constant $p_0 = p_0(q)$ such that, if $p > p_0$ and $G$ is a semisimple algebraic group with root system $\Phi$
	over an algebraically closed field of	characteristic $p$,	then there is a constant $c = c(\Phi)$ such that $\dim\opH^q(G,M) \leq c$ for any simple $G$-module $M$?
	\end{question}
	
	\section{Notation and Preliminaries}
	
	Let $G = \SL_2$ defined over an algebraically closed field $k$ of characteristic $p > 0$.
	Choose a Borel subgroup $B$ corresponding to the negative root, and a maximal torus $T$ contained in $B$.
	The lattice of weights $X(T)$ may be identified with $\Z$, and the lattice of dominant weights $X(T)^+$ with $\N$.
	Thus for each positive integer $\lambda$ there is an irreducible module $L(\lambda)$ of highest weight $\lambda$.
	We also have the induced modules $\nabla(\lambda) := \opH^0(G/B,k_\lambda) = \Ind_B^G(k_\lambda)$, and the Weyl modules $\Delta(\lambda) = \nabla(-w_0\lambda)^*$, where $w_0$ is the longest element of the Weyl group.
	Let $V$ be a $G$-module.
	The $d$th Frobenius automorphism of $G$ raises each matrix entry to the power $p^d$.
	When composed with the representation $G \rightarrow GL(V)$, this induces the module $V^{[d]}$, called the $d$th Frobenius twist of $V$.
	
	For any G-module $M$, the functor $\Hom_G(M,-)$ of $G$-modules is left exact and so has right derived functors, which we denote by $\Ext_G^n(M,-)$.
	We define the cohomology functors $\opH^n(G,-)=\Ext_G^n(k,-)$.
	We will drop the subscript $G$ and simply write $\Ext^n$.
	
	The following three results are central to our analysis.
	
	\begin{thm}[Steinberg's Tensor Product Theorem]
	Let $\lambda \in \N$ be a dominant weight, and write $\lambda = \lambda_0 + p\lambda_1 + \dots + p^n\lambda_n$, where $0 \leq \lambda_i < p$ for each $i$. Then
	\[ L(\lambda) \cong L(\lambda_0) \otimes L(\lambda_1)^{[1]} \otimes \dots \otimes L(\lambda_n)^{[n]} \]
	as $G$-modules.
	\end{thm}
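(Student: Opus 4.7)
This is the classical Steinberg tensor product theorem. My plan is to first identify the right-hand side as a $G$-module with a unique highest weight line of weight $\lambda = \lambda_0 + p\lambda_1 + \dots + p^n\lambda_n$, and then to prove that it is irreducible; the isomorphism with $L(\lambda)$ will follow formally.

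For the highest weight identification, pick a highest weight vector $v_i \in L(\lambda_i)$ of weight $\lambda_i$. Since $L(\lambda_i)^{[i]}$ has the same underlying vector space as $L(\lambda_i)$ but with $T$-action precomposed with the $i$-th power of Frobenius on $T$, the vector $v_i$ viewed inside $L(\lambda_i)^{[i]}$ has weight $p^i\lambda_i$. Hence $v_0 \otimes v_1 \otimes \cdots \otimes v_n$ is a $B$-stable line of weight $\lambda$ in $M := L(\lambda_0) \otimes \cdots \otimes L(\lambda_n)^{[n]}$. Uniqueness of the base-$p$ expansion of $\lambda$, combined with the fact that every weight of $L(\lambda_i)$ is at most $\lambda_i$, shows that $\lambda$ occurs in $M$ with multiplicity exactly one. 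In particular, $L(\lambda)$ is a composition factor of $M$, and there is a $G$-module surjection $M \twoheadrightarrow L(\lambda)$.

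The key and delicate step is irreducibility of $M$. I would argue by induction on $n$ using Frobenius kernels. Let $G_r \subset G$ denote the $r$-th Frobenius kernel, so that $L(\lambda_i)^{[i]}$ is trivial as a $G_i$-module, and recall that the restricted simples $L(\mu)$ with $0 \leq \mu < p$ exhaust the simple $G_1$-modules. Assume inductively that $M' := L(\lambda_0) \otimes \cdots \otimes L(\lambda_{n-1})^{[n-1]} \cong L(\mu)$ for $\mu = \sum_{i<n} p^i\lambda_i < p^n$, which is also a simple $G_n$-module. Given any nonzero $G$-submodule $N$ of $M = M' \otimes L(\lambda_n)^{[n]}$, the triviality of $L(\lambda_n)^{[n]}$ on $G_n$ gives a $G_n$-isomorphism $M \cong M'^{\oplus \dim L(\lambda_n)}$, and simplicity of $M'$ as a $G_n$-module forces $N = M' \otimes W$ (as a vector space) for some subspace $W \subseteq L(\lambda_n)^{[n]}$. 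The ambient $G$-action makes $W$ a $G$-submodule of $L(\lambda_n)^{[n]}$, which is simple, so $W = L(\lambda_n)^{[n]}$ and $N = M$. This forces $M$ simple, and combined with the surjection from Step~1 gives $M \cong L(\lambda)$.

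The main obstacle is the structural claim that a $G$-submodule $N \subset M' \otimes L(\lambda_n)^{[n]}$ must split as $M' \otimes W$. Rigorously, this rests on the Hom computation $\Hom_{G_n}(M', M) \cong \Hom_{G_n}(M', M') \otimes L(\lambda_n)^{[n]} \cong L(\lambda_n)^{[n]}$ and on the observation that the outer $G$-action on this Hom space factors through $G/G_n$ and agrees with the $G$-action on $L(\lambda_n)^{[n]}$. Once this functorial identification is pinned down, irreducibility is immediate and the theorem follows. Everything else, including the base case $n = 0$, is tautological.
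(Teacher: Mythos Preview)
The paper does not give its own proof of this statement: Steinberg's Tensor Product Theorem is listed among the preliminaries in Section~2 and is simply quoted as a known result, with no argument supplied. So there is nothing in the paper to compare your proposal against.

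That said, your sketch is essentially the standard proof (as in Jantzen, \emph{Representations of Algebraic Groups}, II.3). A couple of points worth tightening. First, the inductive step uses that $L(\mu)$ with $\mu < p^n$ remains simple on restriction to the Frobenius kernel $G_n$; you invoke this for $G_1$ (``recall that the restricted simples \dots\ exhaust the simple $G_1$-modules'') but then silently use the $G_n$-analogue. This is true, but it is itself a companion statement to Steinberg's theorem rather than a triviality, so it should be stated explicitly as part of the induction package. Second, the passage from ``$N$ is a $G_n$-submodule of $M' \otimes L(\lambda_n)^{[n]}$'' to ``$N = M' \otimes W$ with $W$ a $G$-submodule'' is exactly the delicate point you flag; your resolution via the $G$-equivariant identification $\Hom_{G_n}(M',M) \cong L(\lambda_n)^{[n]}$ is the right one, and once that is made precise the rest is routine. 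The base case $n=0$ is indeed tautological.
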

	
	\begin{thm}[{\cite[The Linkage Principle]{HA1}}] \nocite{JCJ}
	Let $\lambda$ and $\mu$ be dominant weights, and let $i \in \N$. Then
	\[ \Ext^i(\Delta(\mu),L(\lambda)) \neq 0 \Rightarrow \mbox{ either } \lambda + \mu \equiv -2\mod2p \mbox{ or } \lambda \equiv \mu\mod2p. \]
	\end{thm}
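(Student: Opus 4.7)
The plan is to invoke the general Linkage Principle for reductive groups (as formulated by Andersen and Jantzen) and then make the dot-orbit description completely explicit for $\SL_2$. I would first recall the general statement: if $\Ext^i_G(\Delta(\mu),L(\lambda))\neq 0$, then $\lambda$ and $\mu$ lie in the same orbit under the dot action $w\cdot\nu = w(\nu+\rho)-\rho$ of the affine Weyl group $W_p$ generated by reflections in the walls of the $p$-alcoves. This reduces the problem to describing $W_p\cdot\mu$ explicitly in the rank-one case.

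Next I would specialise to $G=\SL_2$, where $\rho=1$ and the weight lattice is $\Z$. The affine Weyl group $W_p$ is the infinite dihedral group generated by the finite reflection $s:\lambda\mapsto -\lambda-2$ and the affine reflection $t:\lambda\mapsto 2p-2-\lambda$; equivalently, $W_p$ is generated by all reflections of the form $r_n:\lambda\mapsto 2pn-2-\lambda$ for $n\in\Z$. A direct computation shows that a product of two such reflections is a translation $\lambda\mapsto\lambda+2pk$, so $W_p$ acts on $\Z$ as the group of affine maps $\lambda\mapsto\lambda+2pn$ together with $\lambda\mapsto-\lambda-2+2pn$, for $n\in\Z$.

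It then follows that $\lambda\in W_p\cdot\mu$ if and only if either $\lambda=\mu+2pn$ for some $n$, i.e.\ $\lambda\equiv\mu\pmod{2p}$, or $\lambda=-\mu-2+2pn$ for some $n$, i.e.\ $\lambda+\mu\equiv-2\pmod{2p}$. Combining this with the first step yields the desired implication.

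The main obstacle, in my view, is not the explicit rank-one calculation (which is routine once the dot action is written down) but rather the invocation of the general Linkage Principle itself, whose proof relies on non-trivial input such as the strong linkage of composition factors of $\Delta(\mu)$ and a translation-functor argument. Since the paper cites Jantzen's book (the reference \cite{HA1}) for this, I would simply quote that result rather than reprove it, and devote the write-up to the explicit dihedral computation above.
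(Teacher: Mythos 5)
Your proposal is correct and is essentially the intended argument: the paper offers no proof of its own but simply cites the general Linkage Principle, and your write-up supplies exactly the routine specialisation that citation presupposes, namely the block/linkage reduction for $\Delta(\mu)$ followed by the explicit description of the $W_p$-dot-orbit in rank one (where $\rho=1$, the reflections are $\lambda\mapsto 2pn-2-\lambda$, and products of two of them are translations by $2p\Z$), which yields precisely the two congruences $\lambda\equiv\mu$ or $\lambda+\mu\equiv-2 \bmod 2p$. Your computation checks out, so nothing further is needed.
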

		
	\begin{thm}[{\cite[Theorem 6.1]{AP1}}]\label{parkersthm}
	
	Suppose that $p > 2$ is prime, $b \in \mathbb{N}$, $0 \leq i \leq p-2$, and $M$ is a finite-dimensional rational $G$-module. Then

	\begin{equation} \label{even}
	\Ext^q(\Delta(pb+i),M^{[1]} \otimes L(i)) \cong
	\displaystyle \bigoplus_{\substack{n=0 \\ n \text{ even}}}^{n=q} \Ext^{q-n}(\Delta(n+b), M)
	\end{equation}
	
	\begin{equation} \label{odd}
	\Ext^q(\Delta(pb+i),M^{[1]} \otimes L(p-2-i)) \cong
	\displaystyle \bigoplus_{\substack{n=0 \\ n \text{ odd}}}^{n=q} \Ext^{q-n}(\Delta(n+b), M)
	\end{equation}
	
	\end{thm}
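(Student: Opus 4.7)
The plan is to reduce the Ext computation over $G$ to a computation over the first Frobenius kernel $G_1 \triangleleft G$ via the Lyndon-Hochschild-Serre (LHS) spectral sequence, then exploit the fact that $M^{[1]}$ is $G_1$-trivial to strip $M$ off and reduce to a purely structural $G_1$-cohomology question.

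Concretely, for $j \in \{i, p-2-i\}$ (the two cases of the theorem), I would set up
\[ E_2^{s,t} = \opH^s\!\bigl(G/G_1,\, \Ext^t_{G_1}(\Delta(pb+i), M^{[1]} \otimes L(j))\bigr) \Longrightarrow \Ext^{s+t}_G(\Delta(pb+i), M^{[1]} \otimes L(j)), \]
where the residual $G \cong G/G_1$ acts through the Frobenius morphism. Since $G_1$ acts trivially on $M^{[1]}$, the inner Ext splits as $M^{[1]} \otimes \Ext^t_{G_1}(\Delta(pb+i), L(j))$ with its natural $G/G_1$-action.

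The central step is to compute $V_j^t := \Ext^t_{G_1}(\Delta(pb+i), L(j))$ as a $G/G_1$-module. Restricting $\Delta(pb+i)$ to $G_1 T$ yields a baby-Verma filtration with layers indexed by the dot-orbit of $pb+i$, and a Koszul-type calculation over the restricted enveloping algebra $u(\mathfrak{sl}_2)$ should then identify $V_j^t$ as a Frobenius-twisted $G$-module of the form $\bigl(\bigoplus_n \nabla(n+b)\bigr)^{[1]}$, where $n$ ranges over $\{0,\ldots,t\}$ of the parity dictated by $j \in \{i,p-2-i\}$. These two choices are precisely the restricted simples dot-linked to $i$, which is the source of the even/odd dichotomy in the theorem. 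Substituting back, untwisting, and using the adjunction $\opH^s(G, M \otimes \nabla(\mu)) \cong \Ext^s_G(\Delta(\mu), M)$ (valid because $w_0 = -1$ on weights for $\SL_2$, so $\Delta(\mu)^* \cong \nabla(\mu)$), the $E_2$-page collapses into
\[ E_2^{s,t} \cong \bigoplus_{\substack{0 \leq n \leq t \\ n \equiv j \pmod{2}}} \Ext^s_G(\Delta(n+b), M). \]
Abutting to total degree $q$ then gives the theorem, provided the spectral sequence degenerates at $E_2$; I would expect this to follow from a weight-degree argument since the summands of $V_j^t$ sit in mutually distinct internal $T/T_1$-gradings, forcing the higher differentials to vanish.

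The main obstacle is the structural identification in the third paragraph: pinning down $\Ext^*_{G_1}(\Delta(pb+i), L(j))$ not merely as a $T$-graded vector space but as an actual $G/G_1$-module, with $\nabla$-summands of precisely the predicted highest weights and each appearing with multiplicity one. This needs careful bookkeeping with the baby-Verma filtration, the Koszul resolution of $L(j)$, and the linkage structure on restricted weights. Degeneracy of the spectral sequence and the final Frobenius untwist should then be routine by comparison, although I would want to check naturality of the isomorphism carefully. An alternative, perhaps cleaner, route would be to induct on $q$ using a single short exact sequence that shifts $b$ by one at each stage, bypassing the spectral sequence entirely; I have not worked out whether such a sequence exists in closed form.
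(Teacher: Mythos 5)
A preliminary remark: the paper does not prove this statement at all --- it is imported verbatim from Parker \cite{AP1} and used as a black box --- so there is no internal proof to compare yours against, and your attempt has to be judged on its own terms. The general strategy (pass to $G_1\triangleleft G$, use that $M^{[1]}$ is $G_1$-trivial to strip $M$ off) is a sensible one, but the two steps you defer are not mere bookkeeping: one is stated incorrectly and the other rests on an argument that cannot work. First, the proposed $G/G_1$-structure $V_j^t=\Ext^t_{G_1}(\Delta(pb+i),L(j))\cong\bigl(\bigoplus_{n\le t}\nabla(n+b)\bigr)^{[1]}$ is inconsistent with the theorem you are proving: feeding it into a degenerate $E_2$-page yields, for each admissible $n$, the groups $\Ext^{s}_G(\Delta(n+b),M)$ in \emph{every} degree $s\le q-n$, rather than only in degree $s=q-n$. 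The correct answer must be a single $\nabla(t+b)^{[1]}$ in each degree $t$ of the appropriate parity, and zero otherwise. Already for $b=i=0$ this is visible: $\Ext^{2m}_{G_1}(\Delta(0),L(0))=\opH^{2m}(G_1,k)\cong\nabla(2m)^{[1]}$, of dimension $2m+1$, whereas your formula predicts a module of dimension $1+3+\cdots+(2m+1)=(m+1)^2$. Second, the degeneration argument is invalid: the internal $T/T_1$-grading separating the summands of $V_j^t$ is consumed the moment you apply $\opH^s(G/G_1,-\otimes M^{[1]})$, after which the higher differentials are maps between honest $G$-cohomology groups with no residual grading to obstruct them. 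Even with the corrected $E_2$-page (supported on columns $t$ of a single parity), parity of $t$ only forces $d_r=0$ for $r$ even, since $d_r$ shifts $t$ by $r-1$; the differential $d_3$ already connects columns of the same parity and must be killed by a separate argument.

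The deeper problem is that identifying $\Ext^\bullet_{G_1}(\Delta(pb+i),L(j))$ as a $G/G_1$-module, with enough control to also settle degeneration, is essentially as hard as the theorem itself. The natural tool is the exact sequence
\begin{equation*}
0\longrightarrow L(p-2-i)\otimes\Delta(b-1)^{[1]}\longrightarrow\Delta(pb+i)\longrightarrow L(i)\otimes\Delta(b)^{[1]}\longrightarrow 0
\end{equation*}
(obtained for $\SL_2$ by dualising the $p$-th power map on symmetric powers of the natural module), but then the outer terms contribute $G_1$-extension groups of the form $\nabla(b)^{[1]}\otimes\nabla(t)^{[1]}$, whose Clebsch--Gordan decomposition has many $\nabla$-factors, and you must show that the connecting homomorphisms cancel all but one of them. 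This sequence is precisely the ``single short exact sequence that shifts $b$ by one'' whose existence you leave open in your final sentence; an induction on $q$ and $b$ built on it, combined with the equivalence of categories recorded in (\ref{cancelsteinberg}) and the degree-zero base case, is in essence how Parker argues, and I would recommend developing that route rather than the spectral-sequence plan as written.
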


We will also use the following, a consequence of the equivalence of categories of $G$-modules arising from the functor $F:V\mapsto \mathrm{St}_1\otimes V^{[1]}$ (see \cite[II.10.5]{JCJ} and \cite[\S3]{AP1}). We have

\begin{equation}\label{cancelsteinberg}\Ext^q(\Delta(pb+p-1,M^{[1]}\otimes L(p-1))\cong\Ext^q(\Delta(b),M)\end{equation}

	With the help of the Linkage Principle and the Tensor Product Theorem, these formulae can be used to find closed-form descriptions of the sets $W_q$ defined below.
	This is the method we develop in this paper.
		
	\begin{dfn}
	We say that a weight $\lambda$ is $(q,n)$-\textit{cohomological} if $\Ext^q(\Delta(n),L(\lambda)) \neq 0$.
	In particular, we say that $\lambda$ is $q$-\textit{cohomological} if it is $(q,0)$-cohomological, i.e. if $\Ext^q(\Delta(0),L(\lambda)) = \opH^q(\SL_2,L(\lambda)) \neq 0$.
	
	For each $q \geq 1$, the set of all weights $\lambda$ such that $\opH^q(\SL_2,L(\lambda)) \neq 0$ and $\opH^q(\SL_2,L(\lambda)^{[-1]}) = 0$ will be denoted $W_q$.
	We refer to the elements of $W_q$ as the \textit{maximally untwisted} $q$-cohomological weights.
	We will take $W_0 = \{ 0 \}$, as this makes the statement of Proposition \ref{wq} neater.
	\end{dfn}
	
	\section{Closed form descriptions of $\dim \opH^q(G,M)$ for fixed primes: Proof of Theorem A}
	
	We demonstrate an algorithm which uses Theorem \ref{parkersthm} to give, for fixed $p$, a list of the $(q,r)$-cohomological weights $\lambda$, and the dimensions of $\Ext^q(\Delta(r),L(\lambda))$, provided that
	\begin{enumerate}[(i)]
		\item all the $(s,t)$-cohomological weights and the associated dimensions are known for all $s < q$, where $s + t = q + r'$; and
		\item all the $(q,r')$-cohomological weights and the associated dimensions are known;
	\end{enumerate}
	where $r = pr' + r_0$ and $0 \leq r_0 < p$.
	
	Certainly the $(0,r)$-cohomological weights are known: if $\Ext^0(\Delta(r),L(\lambda)) \neq 0$ then $\lambda = r$, and the dimension of the $\Ext$-group is $1$.
	We aim to continue by induction, producing iteratively a list of all $q$-cohomological weights for any fixed $q \in \N$.
	
	Fix $q \in \N$ and let $\lambda, r$ be fixed but arbitrary with $\lambda = p\lambda' + \lambda_0$ and $r = pr' + r_0$.
	Assume that $\Ext^q(\Delta(r),L(\lambda)) \neq 0$ and also assume that (i) and (ii) hold.
	If $r_0 = p-1$ then by Equation (\ref{cancelsteinberg}) we have \[ \Ext^q(\Delta(r),L(\lambda)) = \Ext^q(\Delta(r'),L(\lambda')). \]
	Now by (ii) we know the non-zero dimensions of $\Ext^q(\Delta(r'),L(\lambda'))$, so we may write down a list of all the $(q,r)$-cohmological weights in this case.
	Specifically, suppose $\lambda'$ is a $(q,r')$-cohomological weight with associated dimension $d$;
	then $p\lambda' + p - 1$ is a $(q,r)$-cohomological weight with associated dimension $d$, and every $(q,r)$-cohomological weight arises this way.
	This deals with the case $r_0 = p-1$.
	
	Otherwise, $r_0 \leq p-2$. By the Linkage Principle, we have either $\lambda_0 = r_0$ or $\lambda_0 = p - 2 - r_0$.
	In the first case, Parker's formula (\ref{even}) gives us that \[ \Ext^q(\Delta(r),L(\lambda)) = \displaystyle \bigoplus_{\substack{n=0 \\ n \text{ even}}}^{n=q} \Ext^{q-n}(\Delta(n+r'), L(\lambda')).\]
	We analyse the direct summands in turn.
	The first direct summand is $\Ext^q(\Delta(r'),L(\lambda'))$.
	If $r > 0$ we have $r' < r$, and so again by (ii) we have that $\Ext^q(\Delta(r'),L(\lambda'))$ is known, so we may pass to the next summand.
	If $r = 0$, this first summand is $\Ext^q(\Delta(0),L(\lambda'))$, which we may assume is also known, this time by induction on $\lambda$, since $\Ext^q(\Delta(0),L(0))=0$ for any $q>0$. So we may pass to the next summand.
	The remaining summands are of the form $\Ext^{q-n}(\Delta(n+r'),L(\lambda'))$ with $n \neq 0$.
	Now by (i) these values are assumed to be known, so we are done.
	More specifically, one obtains all the $(q,r)$-cohomological weights from the union of the $(q,r')$-cohomological weights together with the $(q-n,n+r')$-cohomological weights, where $n$ is even.
	To calculate the associated dimension of a $(q,r)$-cohomological weight $\lambda$, we note how many times $\lambda'$ appears as a $(q-n,n+r')$-cohomological weight over all even values of $n$, which is achieved by induction on $\lambda$.
	
	The second case (where $\lambda_0 = p - 2 - r_0$) is very similar to the first, using formula (\ref{odd}) instead of formula (\ref{even}).
	
	By way of example, we use the above procedure to prove the following theorem.
	
	\begin{thm}
	Let $M = L(\lambda)^{[n]}$ be any Frobenius twist (possibly trivial) of the simple module of highest weight $\lambda$.
	\begin{enumerate}[(i)]
	\item If $\opH^1(\SL_2,M) \neq 0$ then $\lambda = 2p-2$.
	\item If $\opH^2(\SL_2,M) \neq 0$ then $\lambda$ is one of
		\begin{itemize}
		\item $2p$
		\item $2p^2 - 2p - 2$ \hspace{5mm} $(p > 2)$
		\item $p^n(2p-2)+2p-2$ \hspace{5mm} $(n > 1)$.
		\end{itemize}
	\item If $\opH^3(\SL_2,M) \neq 0$ and $p > 2$ then $\lambda$ is one of
		\begin{itemize}
		\item $4p - 2$
		\item $2p^2 - 4p$ \hspace{5mm} $(p > 3)$
		\item $p^n(2p-2) + 2p$ \hspace{5mm} $(n > 1)$
		\item $2p^2+2p-2$
		\item $p^n(2p-2)+2p^2-2p-2$ \hspace{5mm} $(n > 2)$
		\item $2p^3-2p^2-2p-2$
		\item $p^n(\lambda_2) + 2p-2$ \hspace{5mm} $(n > 1,$ $\lambda_2$ any 2-cohomolgical weight).
		\end{itemize}
		If $p = 2$ then $\lambda$ is one of
		\begin{itemize}
		\item 6
		\item 8
		\item $2^n + 2$ \hspace{5mm} $(n > 3)$
		\item $2^n + 4$ \hspace{5mm} $(n > 3)$
		\item $2^n + 10$ \hspace{5mm} $(n > 4)$
		\end{itemize}
	\end{enumerate}
	\end{thm}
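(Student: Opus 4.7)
The plan is to execute the iterative algorithm just described, running it through $q = 1, 2, 3$ and taking $r = 0$ at the final stage. At each step the algorithm outputs the complete list of $(q, r)$-cohomological weights together with their Ext-dimensions, and this list then serves as input for the next step. The base case $q = 0$ is immediate (only $\lambda = r$ contributes, with dimension $1$), so the proof of each part is a careful, iterated application of Parker's formulae (\ref{even}) and (\ref{odd}), the Steinberg cancellation (\ref{cancelsteinberg}), the Linkage Principle, and induction on $\lambda$.

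First I would dispatch part (i). Writing any $(1, 0)$-cohomological weight as $\lambda = p\lambda' + \lambda_0$, the Steinberg case $\lambda_0 = p-1$ reduces via (\ref{cancelsteinberg}) to a smaller $(1, 0)$-cohomological problem, while the remaining cases force $\lambda_0 \in \{0, p-2\}$ by the Linkage Principle and Parker's formulae collapse $\Ext^1(\Delta(0), L(\lambda))$ to a $\Hom(\Delta(1), L(\lambda'))$-term, pinning down $\lambda' = 1$. Steinberg's Tensor Product Theorem then assembles this into $\lambda = 2p - 2$, recovering part (i) and Cline's result. For part (ii) I would run the same procedure in degree $2$; the right-hand side of (\ref{even})/(\ref{odd}) now involves $\Ext^1$ and $\Ext^2$ terms, so I must first enumerate the $(1, r)$-cohomological weights for the small values of $r$ that arise as intermediate data. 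The resulting case analysis produces exactly Stewart's three families.

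For part (iii) I carry out the procedure in degree $3$, drawing on both the $(1, r)$- and $(2, r)$-cohomological weights as input. In each branch the Linkage Principle (giving $\lambda_0 = r_0$ or $\lambda_0 = p - 2 - r_0$) combined with one of (\ref{even}), (\ref{odd}), or (\ref{cancelsteinberg}) reduces $\Ext^3(\Delta(0), L(\lambda))$ to a finite direct sum of Ext groups of strictly smaller cohomological degree or with strictly smaller weights, all of which are known at the inductive stage. Reassembling the contributions via Steinberg then produces exactly the seven families in part (iii). The dimension statement is tracked alongside: at each step every summand contributes dimension at most $1$, and for $p > 2$ one verifies that the contributions from different reduction paths never overlap, so each $\dim\opH^3(\SL_2, M)$ is $1$.

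The principal obstacle is the case $p = 2$, for which Theorem \ref{parkersthm} is unavailable. Here I would instead work directly with the well-understood $\SL_2$-representation theory in characteristic $2$, using explicit filtrations of Weyl and induced modules together with a case-by-case computation of the relevant low-degree Ext groups; this yields the restricted five-family list. In the course of this analysis one finds the unique exception $\lambda = 2^n + 4$ with $n > 4$, where two independent reduction paths each contribute a dimension-$1$ summand which do not cancel, so $\dim\opH^3 = 2$. A secondary but more tedious source of difficulty is the bookkeeping at $q = 3$: one must enumerate all $(2, r)$-cohomological weights for every $r$ that can arise, without missing or double-counting any case. This grows quickly with $q$ but remains tractable for $q \leq 3$.
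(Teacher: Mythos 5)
For $p>2$ your plan coincides with the paper's proof: the same recursion on $\lambda = p\lambda' + \lambda_0$, with the Linkage Principle forcing $\lambda_0 \in \{r_0, p-2-r_0\}$, Parker's formulae (\ref{even})/(\ref{odd}) and (\ref{cancelsteinberg}) reducing to lower-degree or smaller-weight data, and induction on $\lambda$ to assemble the lists in degrees $1,2,3$. (One small inaccuracy: with $r=0$ the case $\lambda_0 = p-1$ is excluded outright by linkage, and (\ref{cancelsteinberg}) is not the relevant tool there, since it applies when the Weyl module weight, not $\lambda$, has residue $p-1$; this is harmless.)

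The genuine gap is your treatment of $p=2$. You declare Theorem \ref{parkersthm} unavailable and propose instead ``explicit filtrations of Weyl and induced modules together with a case-by-case computation of the relevant low-degree Ext groups.'' This does not work as stated: the conclusion contains infinite families $2^n+2$, $2^n+4$, $2^n+10$ with $n$ arbitrary, so no finite case-by-case computation can cover them; one needs a recursive reduction valid at $p=2$ that strips off a Frobenius twist, exactly as (\ref{even}) does for odd primes. The paper supplies this from the same source \cite{AP1}: for $p=2$ and $\lambda = 2\lambda'$ one has
\[ \Ext^q(\Delta(2b),L(\lambda)) \cong \bigoplus_{n=0}^{q} \Ext^{q-n}(\Delta(b+n),L(\lambda')), \]
and the whole algorithm then runs verbatim in characteristic $2$, producing the five families (and, as a byproduct, the two independent summands responsible for the dimension-$2$ exception at $\lambda = 2^n+4$). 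Without this formula, or an equivalent mechanism such as the Lyndon--Hochschild--Serre spectral sequence for the first Frobenius kernel made explicit enough to iterate in degree $3$, your $p=2$ argument has no engine to generate or bound the highly twisted weights, so that part of the proof is missing rather than merely sketched.
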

	
	\begin{proof}
	\textit{(i)}
	Write $\lambda = p\lambda' + \lambda_0$.
	First let us assume that $\lambda_0 = p - 2$. Then 
	\[ \Ext^1(\Delta(0),L(\lambda)) = \Ext^0(\Delta(1),L(\lambda')). \]
	This is nonzero if and only if $\lambda' = 1$, and so 
	\[ \dim\Ext^1(\Delta(0),L(\lambda)) = 1 \mbox{ when } \lambda = 2p - 2. \]
	If we assume that $\lambda_0 = 0$ then we get 
	\[ \Ext^1(\Delta(0),L(\lambda)) = \Ext^1(\Delta(0),L(\lambda')). \]
	This tells us that $\lambda$ is 1-cohomological if and only if $\lambda'$ is 1-cohomological.
	Since $\lambda = p\lambda'$, this is just the statement that the twist of a cohomological weight is itself cohomological. \\
	By the Linkage Principle, if $\lambda_0$ is neither 0 nor $p-2$ then $\Ext^1(\Delta(0),L(\lambda))$ is zero.
	Thus we conclude that the only 1-cohomological weights are $p^n(2p - 2)$ for $n \geq 0$, i.e. $W_1 = \{ 2p - 2 \}$, and that in this case the cohomological dimension is 1.
		
	\textit{(ii)}
	Write $\lambda = p\lambda' + \lambda_0$.
	First let us assume that $\lambda_0 = 0$. Then 
	\[ \Ext^2(\Delta(0),L(\lambda)) = \Ext^2(\Delta(0),L(\lambda')) \oplus \Ext^0(\Delta(2),L(\lambda')). \]
	In order for the $\Ext^0$ term to be nonzero we must have $\lambda' = 2$.
	This forces the $\Ext^2$ term to be zero, and so we conclude that 
	\[ \dim\Ext^2(\Delta(0),L(\lambda)) = 1 \mbox{ when } \lambda = 2p. \]
	Now let's assume that $\lambda_0 = p - 2$. Then 
	\[ \Ext^2(\Delta(0),L(\lambda)) = \Ext^1(\Delta(1),L(\lambda')). \]
	We now need to determine the $(1,1)$-cohomological weights.
	So we write $\lambda' = p\lambda'' + \lambda'_0$, and first assume that $\lambda'_0 = p - 3$. Then 
	\[ \Ext^1(\Delta(1),L(\lambda')) = \Ext^0(\Delta(1),L(\lambda'')). \]
	This is nonzero if and only if $\lambda'' = 1$, and so 
	\[ \dim\Ext^2(\Delta(0),L(\lambda)) = 1 \mbox{ when } \lambda = 2p^2 - 2p - 2. \]
	If we assume that $\lambda'_0 = 1$, we get 
	\[ \Ext^1(\Delta(1),L(\lambda')) = \Ext^1(\Delta(0),L(\lambda'')). \]
	This is nonzero if and only if $\lambda'' = p^n(2p-2)$, and so
	\[ \dim\Ext^2(\Delta(0),L(\lambda)) = 1 \mbox{ when } \lambda = 2p - 2 + p^n(2p-2) \mbox{ for some } n \geq 2. \]
	So $W_2 = \{ 2p, 2p^2 - 2p - 2, 2p - 2 + p^n(2p-2)$ $|$ $n \geq 2 \}$.

	\textit{(iii)}
	Write $\lambda = p\lambda' + \lambda_0$.
	First let us assume that $\lambda_0 = 0$. Then 
	\[ \Ext^3(\Delta(0),L(\lambda)) = \Ext^3(\Delta(0),L(\lambda')) \oplus \Ext^1(\Delta(2),L(\lambda')). \]
	Write $\lambda' = p\lambda'' + \lambda'_0$ and assume $\lambda'_0 = 2$. Then
	\[ \Ext^1(\Delta(2),L(\lambda')) = \Ext^1(\Delta(0),L(\lambda'')). \]
	So $\lambda'' = p^n(2p-2)$, giving $\lambda = p^n(2p-2) + 2p$ for some $n \geq 2$. \\
	If we assume that $\lambda'_0 = p - 4$ then
	\[ \Ext^1(\Delta(2),L(\lambda')) = \Ext^0(\Delta(1),L(\lambda'')) \]
	which gives $\lambda =  2p^2 - 4p$. \\
	Now assume that $\lambda_0 = p-2$. Then
	\[ \Ext^3(\Delta(0),L(\lambda)) = \Ext^2(\Delta(1),L(\lambda')) \oplus \Ext^0(\Delta(3),L(\lambda')). \]
	The $\Ext^0$ term is nonzero if and only if $\lambda'=3$, giving $\lambda = 4p-2$. \\
	For the $\Ext^2$ term, write $\lambda' = p\lambda'' + \lambda'_0$ and assume $\lambda'_0=1$. Then
	\[ \Ext^2(\Delta(1),L(\lambda')) = \Ext^2(\Delta(0),L(\lambda'')) \oplus \Ext^0(\Delta(2),L(\lambda'')). \]
	The $\Ext^0$ term is nonzero if and only if $\lambda''=2$, giving $\lambda = 2p^2+2p-2$.
	The $\Ext^2$ term is nonzero if and only if $\lambda=p^2(\lambda_2)+2p-2$, where $\lambda_2$ denotes any 2-cohomological weight. \\
	If we assume that $\lambda'_0=p-3$ then
	\[ \Ext^2(\Delta(1),L(\lambda')) = \Ext^1(\Delta(1),L(\lambda'')). \]
	So now we have to write $\lambda'' = p\lambda''' + \lambda''_0$. If $\lambda''_0=1$ then
	\[ \Ext^1(\Delta(1),L(\lambda'')) = \Ext^1(\Delta(0),L(\lambda''')) \]
	which is nonzero if and only if $\lambda'''=p^n(2p-2)$, giving $\lambda = p^n(2p-2)+2p^2-2p-2$ for some $n \geq 3$. \\
	If $\lambda''_0=p-3$ then
	\[ \Ext^1(\Delta(1),L(\lambda'')) = \Ext^0(\Delta(1),L(\lambda''')) \]
	which is nonzero if and only if $\lambda'''=1$, giving $\lambda = 2p^3-2p^2-2p-2$.
	
	For the $p=2$ result, we use the following formula, taken from \cite{AP1}:
	\[ \Ext^q(\Delta(2b),L(\lambda)) = \displaystyle \bigoplus_{n=0}^{n=q} \Ext^{q-n}(\Delta(b+n), L(\lambda')) \]
	where $\lambda = 2\lambda'$.
	So with $b=0$ and $q=3$, we have \[ \Ext^3(\Delta(0),L(\lambda)) = \Ext^3(\Delta(0),L(\lambda')) \oplus \Ext^2(\Delta(1),L(\lambda')) \oplus \Ext^1(\Delta(2),L(\lambda')) \oplus \Ext^0(\Delta(3),L(\lambda')). \]
	The $\Ext^0$ term is nonzero if and only if $\lambda'=3$, giving $\lambda = 6$. \\
	Applying the formula to the $\Ext^1$ term gives \[ \Ext^1(\Delta(2),L(\lambda')) = \Ext^1(\Delta(1),L(\lambda'')) \oplus \Ext^0(\Delta(2),L(\lambda'')). \]
	The $\Ext^0$ term is nonzero if and only if $\lambda''=2$, giving $\lambda = 8$. \\
	We have $\Ext^1(\Delta(1),L(\lambda'')) = \Ext^1(\Delta(0),L(\lambda'''))$, where $\lambda'' = 2\lambda''' + 1$.
	Taking $\lambda''' = 2^n$ for $n > 0$ gives $\lambda = 2^n + 4$ for $n > 3$. \\
	The next summand to analyse is $\Ext^2(\Delta(1),L(\lambda'))$.
	We have \[ \Ext^2(\Delta(1),L(\lambda')) = \Ext^2(\Delta(0),L(\lambda'')) \] where $\lambda' = 2\lambda'' + 1$.
	If this is non-zero then either $\lambda'' = 2^n$ with $n > 1$ or $\lambda'' = 2^n + 2$ with $n > 2$.
	These give $\lambda = 2^n + 2$ with $n>3$ and $\lambda = 2^n + 10$ with $n>4$ respectively.
	
	\end{proof}
	
	The first part of this theorem confirms a result of Cline in \cite{EC1}.
	The second part confirms a result of Stewart in \cite{DS1}.
	
	\section{Generic results for large primes: Proof of Theorem B}\label{s:genres}
	
	From the examples in the previous section, one sees that when $p \geq 5$, the list of cohomological weights is uniform with $p$.
	Indeed, by inspecting examples, one may observe that when $p > q$, the list of $q$-cohomological weights is uniform with $p$.
	In this section we will prove that this is always true.
	Henceforth we shall assume that $p > q$.
	
	Suppose $\lambda$ is $q$-cohomological, so $H = \opH^q(G,L(\lambda)) = \Ext^q(\Delta(0),L(\lambda)) \neq 0$.
	Then it follows from the Linkage Principle that either $\lambda = p\lambda'$ for some even $\lambda'$ or $\lambda = p\lambda' + p - 2$ for some odd $\lambda'$.
	Thus
	\[ H = \displaystyle \bigoplus_{n=0}^{n=q} \Ext^{q-n}(\Delta(n), L(\lambda')) \]
	where the sum is taken over either even or odd numbers only.
	Clearly if $H$ is nonzero then one of the summands $\Ext^{q-i}(\Delta(i),L(\lambda'))$ is nonzero.
	So, Parker's formulae have reduced the computation of $q$-cohomological weights to the computation of $(q-n,n)$-cohomological weights, for all $0 < n \leq q$.
	Furthermore, every maximally untwisted $q$-cohomological weight arises from a $(q-n,n)$-cohomological weight in this way.
	
	The next result shows how the $(q-n,n)$-cohomological weights themselves arise from $(q-n)$-cohomological weights.
	
	\begin{lem}\label{lemon}
	Suppose $\lambda \in X(T)^+$ is linked to zero, and let $n \leq p - 2$ be an integer.
	\begin{enumerate}[(i)]
	\item If $\lambda \equiv 0\mod 2p$ then $\Ext^i(\Delta(0),L(\lambda)) = \Ext^i(\Delta(n),L(\lambda+n))$.
	\item If $\lambda \equiv -2\mod 2p$ then $\Ext^i(\Delta(0),L(\lambda)) = \Ext^i(\Delta(n),L(\lambda-n))$.
	\end{enumerate}
	\end{lem}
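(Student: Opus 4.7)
The plan is to apply Parker's formulae to both sides of each claimed isomorphism, reducing each side to the same direct sum. Since $\lambda$ is linked to zero, its final $p$-adic digit is forced: it equals $0$ in case (i) and $p-2$ in case (ii). Writing $\lambda = p\lambda' + \lambda_0$, Steinberg's tensor product theorem gives $L(\lambda) = L(\lambda_0) \otimes L(\lambda')^{[1]}$. Because $0 \leq n \leq p-2$, the weights $\lambda + n$ in case (i) and $\lambda - n$ in case (ii) have well-defined final $p$-adic digits $n$ and $p-2-n$ respectively, so Steinberg also gives $L(\lambda+n) = L(n) \otimes L(\lambda')^{[1]}$ and $L(\lambda-n) = L(p-2-n) \otimes L(\lambda')^{[1]}$.

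For case (i), I apply formula (\ref{even}) to each side with $M = L(\lambda')$ and $b = 0$. On the left, using $L(\lambda) = L(\lambda')^{[1]} \otimes L(0)$, Parker's Steinberg index is $0$ and I obtain
\[ \Ext^i(\Delta(0), L(\lambda)) = \bigoplus_{\substack{k=0 \\ k \text{ even}}}^{k=i} \Ext^{i-k}(\Delta(k), L(\lambda')). \]
On the right, using $L(\lambda+n) = L(\lambda')^{[1]} \otimes L(n)$, Parker's Steinberg index is $n$ (and the hypothesis $0 \leq n \leq p-2$ is exactly what Parker requires), yielding the identical direct sum. This proves (i). Case (ii) is strictly parallel: one uses formula (\ref{odd}) with Parker's Steinberg index equal to $0$ on the left (so that $L(p-2-i) = L(p-2)$ matches $\lambda_0 = p-2$) and equal to $n$ on the right (so that $L(p-2-i) = L(p-2-n)$ matches the last digit of $\lambda - n$). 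Both sides collapse to the same direct sum indexed by odd $k$.

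There is no real conceptual obstacle: the argument is essentially a double substitution into Parker's formulae, with Steinberg's theorem supplying the required tensor-product factorisations. The only thing to watch is the bookkeeping, since Parker's statement uses the letter $i$ for the Steinberg digit and the lemma uses $i$ for the cohomological degree; these two roles must be kept carefully distinct. One should also note the trivial edge case $n = 0$, where the claimed isomorphism reduces to an identity and no work is needed.
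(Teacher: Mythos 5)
Your argument is correct, but it is not the route taken in the paper's own proof: there the lemma is deduced from Jantzen's translation functors, via the adjunction $\Ext^i_G(V,T^\lambda_\mu V')\cong\Ext^i_G(T^\mu_\lambda V,V')$ with $\mu=\lambda+n$ (resp.\ $\lambda-n$), together with $T^\lambda_\mu L(\mu)=L(\lambda)$ and $T^\mu_\lambda\Delta(0)=\Delta(n)$. What you do — factor $L(\lambda)$, $L(\lambda\pm n)$ by Steinberg, then expand both sides with formula (\ref{even}) (resp.\ (\ref{odd})), taking $b=0$ with Steinberg digit $0$ on the left and $n$ on the right, so that both sides collapse to $\bigoplus_k \Ext^{i-k}(\Delta(k),L(\lambda'))$ over even (resp.\ odd) $k$ — is precisely the alternative proof the paper itself sketches in the remark immediately following the lemma, so your substitutions and digit bookkeeping check out (in particular the hypothesis $n\le p-2$ is exactly what makes $n$ a legitimate Steinberg digit in Theorem \ref{parkersthm}). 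The trade-off: the translation-functor proof is shorter and does not rely on Parker's theorem, hence avoids its hypothesis $p>2$; your version stays entirely within the paper's main computational tool and exhibits the common direct-sum decomposition explicitly, and the $p>2$ restriction is harmless since for $p=2$ the condition $n\le p-2$ forces $n=0$, the trivial case you already note.
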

	
	\begin{proof}
	In \cite[II.7.6]{JCJ}, Jantzen gives the isomorphisms
	\[ \Ext^i_G(V,T^\lambda_\mu V') \cong \Ext^i_G(T^\mu_\lambda V,V') \]
	for all $i$, where $V$ and $V'$ are $G$-modules and $T^\mu_\lambda$ is a translation functor.
	Setting $\mu = \lambda + n$ we can apply this to get
	\begin{align*}
	\Ext^i(\Delta(0),L(\lambda)) &= \Ext^i(\Delta(0),T^\lambda_\mu L(\mu)) \\
	&\cong \Ext^i(T^\mu_\lambda \Delta(0),L(\mu)) \\
	&= \Ext^i(\Delta(n),L(\mu))
	\end{align*}
	which proves \textit{(i)}. Part \textit{(ii)} is similar.
	\end{proof}

	\begin{rmk}
	Alternatively, this Lemma can be proved directly from Parker's formulae:
	For part \textit{(i)}, if we expand Ext$^i(\Delta(0),L(\lambda))$ and Ext$^i(\Delta(n),L(\lambda+n))$ using formula (\ref{even}),
	we see that they are both equal to 
	\[ \Ext^i(\Delta(0),L(\lambda)^{[-1]}) \oplus \Ext^{i-2}(\Delta(2),L(\lambda)^{[-1]}) \oplus \dots \oplus \Ext^a(\Delta(i-a),L(\lambda)^{[-1]}) \]
	where $a=1$ if $i$ is odd, $a=0$ otherwise. Part \textit{(ii)} is proved similarly with formula (\ref{odd}).
	\end{rmk}
	
	Therefore, all $q$-cohomological weights arise from $(q-n)$-cohomological weights via the method described above.
	
	Lemma \ref{lemon} motivates Definition \ref{shifts} which the reader should recall now.
	
	\begin{cor}\label{core}
	If $\lambda$ is $(q-n)$-cohomological, then $\lambda \parallel n$ is $q$-cohomological.
	\end{cor}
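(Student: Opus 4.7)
The plan is to combine Lemma \ref{lemon} with the two branches of Parker's formula. Since $\lambda$ is $(q-n)$-cohomological, the Linkage Principle forces $\lambda \equiv 0 \pmod{2p}$ or $\lambda \equiv -2 \pmod{2p}$, so Definition \ref{shifts} applies. The argument splits into the four cases of that definition, but they are all parallel; I would treat the first in detail and indicate how to transcribe the others.

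For the case $\lambda \equiv 0 \pmod{2p}$ with $n$ even, we have $\lambda \parallel n = p(\lambda+n)$, so by the Tensor Product Theorem $L(\lambda \parallel n) \cong L(\lambda+n)^{[1]} \otimes L(0)$. The first step is to feed this into Parker's formula (\ref{even}) with $b=0$, $i=0$, and $M = L(\lambda + n)$, producing
\[ \Ext^q(\Delta(0), L(\lambda \parallel n)) \cong \bigoplus_{\substack{n'=0 \\ n'\text{ even}}}^{q} \Ext^{q-n'}(\Delta(n'), L(\lambda+n)). \]
The second step is to isolate the summand with $n' = n$ and apply Lemma \ref{lemon}(i), which gives $\Ext^{q-n}(\Delta(n), L(\lambda+n)) \cong \Ext^{q-n}(\Delta(0), L(\lambda))$. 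By hypothesis this last group is nonzero, so the whole direct sum is nonzero, as required.

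The case $\lambda \equiv 0 \pmod{2p}$ with $n$ odd is identical except that $L(\lambda \parallel n) \cong L(\lambda+n)^{[1]} \otimes L(p-2)$, and one replaces (\ref{even}) by (\ref{odd}); the summand $n' = n$ is again extracted and Lemma \ref{lemon}(i) is applied. The two remaining cases with $\lambda \equiv -2 \pmod{2p}$ are handled by the same template, but appealing to Lemma \ref{lemon}(ii) to translate between $\Ext^{q-n}(\Delta(0),L(\lambda))$ and $\Ext^{q-n}(\Delta(n),L(\lambda-n))$.

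There is essentially no obstacle: the design of Definition \ref{shifts} has been arranged precisely so that each case of $\lambda \parallel n$ aligns exactly with one summand on the right-hand side of one of Parker's two formulas. The only thing to watch is the hypothesis $n \leq p-2$ in Lemma \ref{lemon}; since in the intended application $n \leq q$ and the ambient assumption of \S\ref{s:genres} is $p > q$, this is automatic in the generic range, with the boundary case $n = p-1$ covered instead by equation (\ref{cancelsteinberg}) and needing no shift.
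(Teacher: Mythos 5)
Your argument is essentially identical to the paper's: both expand $\Ext^q(\Delta(0),L(\lambda\parallel n))$ via the appropriate branch of Parker's formula, isolate the summand corresponding to $n$, and identify it with $\Ext^{q-n}(\Delta(0),L(\lambda))$ using Lemma \ref{lemon}, which is nonzero by hypothesis. The paper likewise treats only the case $\lambda\equiv 0\bmod 2p$ with the shift landing in formula (\ref{even}) and defers the other three cases to ``similar arguments,'' so your write-up matches it in both substance and level of detail.
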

	
	\begin{proof}
	There are four separate cases to consider, which, following Definition \ref{shifts}, arise from the parity of $q-n$ and the parity of $\lambda$.
	We will only prove the case where $q-n$ is even and $\lambda \equiv 0 \mod 2p$; the remaining cases can be proved with similar arguments.
	 We have:
	\begin{align*}
	\dim\Ext^q(\Delta(0),L(\lambda \parallel n)) &= \dim\Ext^q(\Delta(0),L(p(\lambda + n))) \\
	&\geq \dim\Ext^{q-n}(\Delta(n),L(\lambda + n)) &( \mbox{by formula (\ref{even})}) \\
	&= \dim\Ext^{q-n}(\Delta(0),L(\lambda)) &( \mbox{by Lemma \ref{lemon}}) \\
	&> 0. &( \mbox{by assumption})
	\end{align*}
	\end{proof}
	
	\begin{lem}\label{twisting}
	Assume $p > q \geq 1$.
	If $\lambda$ is linked to zero then $\opH^q(G,L(\lambda)) = \opH^q(G,L(\lambda)^{[1]})$.
	\end{lem}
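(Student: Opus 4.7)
The plan is to expand $\opH^q(G,L(\lambda)^{[1]})$ using Parker's formula and then collapse the resulting direct sum onto $\opH^q(G,L(\lambda))$ via the Linkage Principle. Taking $b=0$, $i=0$, and $M=L(\lambda)$ in formula (\ref{even}), and using $L(0)=k$, I get
\[ \opH^q(G,L(\lambda)^{[1]}) = \Ext^q(\Delta(0),L(\lambda)^{[1]}) \cong \bigoplus_{\substack{n=0 \\ n \text{ even}}}^{q} \Ext^{q-n}(\Delta(n),L(\lambda)). \]
The $n=0$ summand is precisely $\opH^q(G,L(\lambda))$, so the entire lemma reduces to showing that every summand with $n\geq 2$ vanishes.

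To obtain this vanishing I invoke the Linkage Principle: nontriviality of $\Ext^{q-n}(\Delta(n),L(\lambda))$ would force either $\lambda+n \equiv -2\mod 2p$ or $\lambda \equiv n\mod 2p$. Since $\lambda$ is linked to zero, $\lambda \equiv 0$ or $\lambda \equiv -2\mod 2p$, and combining these two possibilities with the two linkage options a short case check shows that in every instance $n$ must itself be $\equiv 0$ or $\equiv -2 \mod 2p$. The hypothesis $p>q$, however, forces $0<n\leq q<p$, and no positive integer strictly less than $p$ can satisfy either congruence; this contradiction kills all the offending summands.

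The one point that requires separate treatment is the assumption $p>2$ built into Theorem~\ref{parkersthm}, which excludes the corner $p=2$. There the only case consistent with $p>q\geq 1$ is $q=1$, and I would dispatch it by direct inspection using the list of $1$-cohomological weights computed in Section~3, namely $\{2^n : n\geq 1\}$: this set is manifestly stable under the Frobenius twist $L(2^n)\mapsto L(2^{n+1})$, so both sides of the desired equality have dimension $1$ when $\lambda$ lies in the list and dimension $0$ otherwise for $\lambda$ linked to zero. This small postscript at $p=2$ is really the only delicate step; the main Linkage-Principle reduction is clean and essentially forced by Parker's identity.
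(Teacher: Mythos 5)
Your proof is correct and follows essentially the same route as the paper's: expand $\opH^q(G,L(\lambda)^{[1]})$ via Parker's formula (\ref{even}) with $b=i=0$, identify the $n=0$ summand with $\opH^q(G,L(\lambda))$, and kill every summand with $n>0$ by the Linkage Principle using $0<n\leq q<p$. The only difference is your explicit treatment of the $p=2$, $q=1$ corner, where Theorem \ref{parkersthm} does not literally apply; the paper's proof silently ignores this, so your postscript is a welcome extra precaution rather than a genuine divergence.
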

	
	\begin{proof}We have
	\begin{align*}
	\opH^q(G,L(\lambda)^{[1]}) &= \Ext^q(\Delta(0),L(\lambda)^{[1]}) \\
	&= \displaystyle \bigoplus_{\substack{n=0 \\ n \text{ even}}}^{n=q} \Ext^{q-n}(\Delta(n),L(\lambda)) &\mbox{ (by formula (\ref{even}))} \\
	&= \Ext^q(\Delta(0),L(\lambda)) &\mbox{ (by the Linkage Principle, since $n \leq q < p$)}\\
	&= \opH^q(G,L(\lambda)).
	\end{align*}
	\end{proof}
	
	The following is an immediate consequence of the lemma, using the injective map $\opH^m(G,M)\to \opH^m(G,M^{[1]})$ induced by the Frobenius twist.
	
	\begin{cor}\label{lemur}
	If $\lambda$ is $q$-cohomological then $p^n\lambda$ is $q$-cohomological for all $n > 0$.
	\end{cor}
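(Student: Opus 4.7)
The plan is to combine Lemma~\ref{twisting} with Steinberg's Tensor Product Theorem and iterate. Suppose $\lambda$ is $q$-cohomological, so $\opH^q(G,L(\lambda)) = \Ext^q(\Delta(0),L(\lambda)) \neq 0$. By the Linkage Principle, $\lambda$ must be linked to zero, i.e. $\lambda \equiv 0$ or $\lambda \equiv -2 \pmod{2p}$. This puts us in position to invoke Lemma~\ref{twisting} (valid because we are assuming $p > q$ throughout this section), which gives the isomorphism $\opH^q(G,L(\lambda)) \cong \opH^q(G,L(\lambda)^{[1]})$.

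Next I would identify the twist: by Steinberg's Tensor Product Theorem, $L(\lambda)^{[1]} \cong L(p\lambda)$. Combined with the previous isomorphism, this yields $\opH^q(G,L(p\lambda)) \neq 0$, so $p\lambda$ is $q$-cohomological. The result for general $n$ follows by induction on $n$, provided the hypothesis of Lemma~\ref{twisting} continues to hold after each twist.

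The one point that needs a short verification — and the closest thing to an obstacle — is that $p\lambda$ is itself linked to zero, so that Lemma~\ref{twisting} can be reapplied. This is a trivial mod-$2p$ check: if $\lambda \equiv 0 \pmod{2p}$ then $p\lambda \equiv 0 \pmod{2p}$, and if $\lambda = 2pk - 2$ then $p\lambda = 2p^2 k - 2p \equiv 0 \pmod{2p}$. In either case $p\lambda$ is linked to zero, and the induction proceeds. Alternatively, as the remark preceding the statement suggests, one can package the whole argument via the injective map $\opH^q(G,M) \hookrightarrow \opH^q(G,M^{[1]})$ induced by the Frobenius and apply it $n$ times to $L(\lambda)$, using $L(\lambda)^{[n]} \cong L(p^n\lambda)$; then nonvanishing of the source forces nonvanishing of the target without any further linkage bookkeeping.
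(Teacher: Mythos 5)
Your proposal is correct and matches the paper's argument, which likewise derives the corollary from Lemma~\ref{twisting} together with the injectivity of the Frobenius map $\opH^q(G,M)\to\opH^q(G,M^{[1]})$; your explicit check that $p\lambda$ remains linked to zero is a worthwhile detail the paper leaves implicit. Nothing further is needed.
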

	
	The next result gives a closed-form description of the set $W_q$.
		
	\begin{thm}\label{wq}
	Assume $p > q \geq 1$.
	Then \[ W_q = \{ (p^n\lambda_i) \parallel (q-i) \mbox{ } | \mbox{ } n \geq 0, \lambda_i \in W_i, i = 0 \dots q-1 \}. \]
	\end{thm}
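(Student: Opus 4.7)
The plan is to prove both the set equality and the dimension claim $\dim \opH^q(G, L(\lambda)) = 1$ simultaneously by induction on $q$. The base case $q = 1$ gives $W_1 = \{2p - 2\}$, which is handled by the direct analysis of the previous section.

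For the inclusion $W_q \supseteq \{(p^n\lambda_i) \parallel (q-i)\}$, given $\lambda_i \in W_i$ and $n \geq 0$, iterating Corollary \ref{lemur} shows $p^n \lambda_i$ is still $i$-cohomological, and Corollary \ref{core} then gives that $(p^n\lambda_i) \parallel (q-i)$ is $q$-cohomological. To upgrade this to membership in $W_q$, I would split on the parity of $q - i$: when $q - i$ is odd, Definition \ref{shifts} makes the shifted weight congruent to $p - 2$ mod $p$, so it admits no Frobenius untwist; when $q - i$ is even (so $\geq 2$), the would-be untwist $p^n \lambda_i \pm (q - i)$ lies in a residue class mod $2p$ that is neither $0$ nor $-2$ (using $2 \leq q - i < p$), so the Linkage Principle forces the vanishing of its $q$-th cohomology.

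For the reverse inclusion, take $\lambda \in W_q$. The Linkage Principle forces $\lambda \equiv 0$ or $-2$ mod $2p$, so Steinberg's Tensor Product Theorem gives $L(\lambda) \cong L(\lambda')^{[1]}$ or $L(p-2) \otimes L(\lambda')^{[1]}$ for some $\lambda'$. Parker's formulae (\ref{even}) and (\ref{odd}) then decompose
\[ \opH^q(G,L(\lambda)) = \bigoplus_{n} \Ext^{q-n}(\Delta(n), L(\lambda')), \]
with $n$ of appropriate parity ranging over $[0,q]$. Maximal untwistedness of $\lambda$ kills the $n = 0$ summand, so some $n \geq 1$ must contribute nontrivially. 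The internal application of the Linkage Principle forces $\lambda' \equiv n$ or $\equiv -2 - n$ mod $2p$, and Lemma \ref{lemon} converts the nonvanishing of $\Ext^{q-n}(\Delta(n), L(\lambda'))$ into that of $\opH^{q-n}(G, L(\mu))$ for the corresponding $\mu \in \{\lambda' - n, \lambda' + n\}$. By the inductive hypothesis, $\mu = p^N \mu_0$ for some $\mu_0 \in W_{q-n}$, and a case-by-case match against the four clauses of Definition \ref{shifts} verifies $\lambda = \mu \parallel n = (p^N \mu_0) \parallel n$, exhibiting $\lambda$ in the required form with $i = q - n$.

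The dimension claim drops out of the same analysis: since $p > q$, the residues $n$ and $-2 - n$ for $n \in [0,q]$ are pairwise distinct mod $2p$, so at most one summand of Parker's decomposition is nonzero; Lemma \ref{lemon} preserves dimensions, and iterating Lemma \ref{twisting} absorbs the $p^N$ twist without changing cohomology (each intermediate weight $p^k \mu_0$ remains linked to zero). Hence $\dim \opH^q(G, L(\lambda)) = \dim \opH^{q-n}(G, L(\mu_0)) = 1$ by induction. The main obstacle I anticipate is the case-by-case bookkeeping against the four branches of Definition \ref{shifts}, combined with carefully deploying maximal untwistedness to rule out the $n = 0$ summand; the substantive content, however, is already encapsulated in Lemmas \ref{lemon} and \ref{twisting}.
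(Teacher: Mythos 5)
Your proposal is correct and follows essentially the same route as the paper: the forward inclusion via Parker's decomposition plus Lemma \ref{lemon} and induction, and the reverse inclusion via Corollaries \ref{lemur} and \ref{core}. You additionally spell out two points the paper leaves implicit or defers --- the check that the shifted weights are maximally untwisted, and the dimension-one claim (which the paper proves separately immediately afterwards by the same linkage argument you use) --- but these are refinements of, not departures from, the paper's argument.
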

	
	\begin{proof}
	If $\lambda \in W_q$ then by Theorem \ref{parkersthm} there is an integer $n$ such that $1 \leq n \leq q$ and the space $\Ext^{q-n}(\Delta(n),L(\lambda'))$ is nonzero (where $\lambda = p\lambda' + \lambda_0$ and $0 \leq \lambda_0 < p$).
	By Lemma \ref{lemon}, either $\lambda' + n$ or $\lambda' - n$ is $(q-n)$-cohomological.
	But then either $\lambda = (\lambda' + n) \parallel n$ or $\lambda = (\lambda' - n) \parallel n$,
	and so $\lambda \in \{ (p^n\lambda_i) \parallel (q-i) \mbox{ } | \mbox{ } n \geq 0, \lambda_i \in W_i, i = 0 \dots q-1 \}$.
	
	It then follows from Corollaries \ref{lemur} and \ref{core} that $W_q$ contains this set.
	\end{proof}
	
	\begin{proof}[Proof of Theorem A]
	If $\opH^q(G,L(\lambda)) \neq 0$ then by Lemma \ref{twisting} we may assume that $\lambda \in W_q$.	
	So it remains to show that if $\lambda \in W_q$ then $\dim\opH^q(G,L(\lambda)) = \dim\Ext^q(\Delta(0),L(\lambda)) = 1$.
	We will use induction on $q$, and restrict our attention to the case where $q-i$ is even, $\lambda_i \equiv 0 \mod 2p$ and $n=0$; as before, the remaining cases are dealt with similarly.
	Let $\lambda_i \in W_i$, so that $\lambda = \lambda_i \parallel (q-i) \in W_q$.
	Then \[ \Ext^q(\Delta(0),L(\lambda)) \cong \displaystyle \bigoplus_{\substack{n=0 \\ n \text{ even}}}^{n=q} \Ext^{q-n}(\Delta(n),L(\lambda_i + q-i)). \]
	Since $n \leq q < p$, then by the Linkage Principle, the only term that can be nonzero is $n=q-i$. Thus
	\begin{align*}
	\Ext^q(\Delta(0),L(\lambda)) &= \Ext^i(\Delta(q-i),L(\lambda_i + q-i)) \\
	&= \Ext^i(\Delta(0),L(\lambda_i)) &( \mbox{by Lemma \ref{lemon}})
	\end{align*}
	whose dimension is 1 by induction.
	\end{proof}

	\subsection*{Acknowledgements}
	I would like to thank my supervisor D. Stewart for his help in producing this paper.
		
	\bibliographystyle{amsalpha}
	\bibliography{bib}
	
\end{document}